\newcommand{\abs}[1]{\left\vert#1\right\vert}     
\newcommand{\norm}[1]{\left\Vert#1\right\Vert}    
\newcommand{\set}[1]{\left\{\,#1\,\right\}}       
\newcommand{\paran}[1]{\left(#1\right)}           
\newcommand{\paranp}[1]{\left[#1\right]}          
\newcommand{\To}{\rightarrow}                     
\newcommand{\unu}[1]{#1^{\prime}}                 
\newcommand{\dt}{\, \mathrm{d}t}
\newcommand{\du}{\, \mathrm{d}u}
\newcommand{\dg}{\, \mathrm{d}g(t)}
\newcommand{\R}{\mathbb R}    
\newtheorem{theorem}{Theorem}
\theoremstyle{definition}
\newtheorem{lemma}[theorem]{Lemma}
\begin{document}
%
\title{The rate of convergence of some Riemann-Stieltjes sums}
\author{Adrian Holho\c s}
\maketitle
%
\begin{abstract}
We give the rate of convergence of some optimal lower Riemann-Stieltjes sums toward the integral.
\end{abstract}
%
%

\section{Introduction}
Let $[a,b]$ be a bounded closed interval. Let $f,g$ be two functions defined on $[a,b]$. Consider an  $n$-division $\Delta$ of $[a,b]$ defined by
\[\Delta\colon\ a=t_{0}<t_{1}<t_{2}<\dots <t_{n}=b\]
and consider $\xi=(\xi_1,\xi_2,\dots,\xi_n)$ such that $\xi_i\in[t_{i-1},t_{i}]$ for every $1\leq i\leq n$. The Riemann-Stieltjes sum is defined by
\[RS(f,g,\Delta,\xi)=\sum_{i=1}^{n}f(\xi_i)\cdot [g(t_{i})-g(t_{i-1})].\]
The function $f$ is said to be Riemann-Stieltjes integrable with respect to $g$ if there is an $I\in\R$ with the property that for every $\varepsilon>0$ there exists $\delta>0$ such that for every division $\Delta$ of $[a,b]$ with mesh $\norm{\Delta}=\max_{1\leq i\leq n}(t_{i}-t_{i-1})$ less than $\delta$ and every choice of the points $\xi_{i}$ in $[t_{i-1},t_{i}]$ we have $$\abs{RS(f,g,\Delta,\xi)-I}<\varepsilon.$$
The number $I$ is denoted $\int_{a}^{b}f(t)\dg$ and is called the Riemann-Stieltjes integral of $f$ with respect to $g$. When $g(x)=x$ we obtain the Riemann integrability.\par
Consider the lower Riemann-Stieltjes sum of a continuous function $f$ on $[a,b]$
\[RS(f,g,\Delta,\min)=RS(f,g,\Delta,\xi),\] where the points $\xi_{i}$ are chosen such that $f(\xi_i)=\min_{t\in[a,b]}f(t)$. The set of all $n$-divisions of $[a,b]$ is compact and $\Delta\mapsto RS(f,g,\Delta,\min)$ is continuous, so there is an optimal $n$-division $\Delta_{opt}$ at which the lower Riemann-Stieltjes sum is maximum. This optimal $n$-division may not be unique, but the sum $RS(f,g,\Delta_{opt},\min)$ is unique.\par 
In Theorem \ref{holhos-theorem-main} we give the rate of approximation of the Riemann-Stieltjes integral by the optimal lower Riemann-Stieltjes sums, a result which ge\-ne\-ra\-li\-zes Theorem 1.2 of \cite{holhos-Tas09}.
\section{Main results}
We give next a generalization of a Lemma found in \cite{holhos-Gle79}.
\begin{lemma}\label{holhos-gleason-general}
Let $g\colon[a,b]\To\R$ be a non-negative continuous function which is not identically zero on any open subinterval of $[a,b]$ and let $h:[a,b]\To\R$ be a strictly positive and continuous function on $[a,b]$. For any positive integer $n$ there exists a division of $[a,b]$:
\[t_{0}=a<t_{1}<t_{2}<\dots <t_{n-1}<t_{n}=b\] such that the quantities
\[\paran{t_{i}-t_{i-1}}\cdot \max_{t\in [t_{i-1},t_{i}]}g(t)\cdot \max_{t\in [t_{i-1},t_{i}]}h(t),\qquad 1\leq i\leq n\]
are all equal to each other. Moreover, if $J_{n}$ is the common value of all these quantities, then
\[\lim_{n\rightarrow\infty} nJ_{n}=\int_{a}^{b}g(t)h(t)\dt.\]
\end{lemma}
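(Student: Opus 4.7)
My plan has two parts: first the existence of the equalizing partition, then identification of the limit of $nJ_n$.

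For existence, I would work with the continuous function
\[
F(x,y) = (y-x)\cdot\max_{t\in[x,y]}g(t)\cdot\max_{t\in[x,y]}h(t),\qquad a\le x\le y\le b.
\]
Under the hypotheses (strict positivity of $h$ and the no-vanishing-on-subintervals property of $g$), both maxima are strictly positive whenever $y>x$, so $y\mapsto F(x,y)$ is continuous and strictly increasing from $0$ at $y=x$ to $F(x,b)$ at $y=b$. This yields a continuous inverse $\psi(x,\cdot)\colon [0,F(x,b)]\to[x,b]$. Starting with $t_0(J)=a$ and iterating $t_i(J)=\psi(t_{i-1}(J),J)$ on the maximal interval of $J$ for which the iteration stays in $[a,b]$, I obtain continuous non-decreasing functions $J\mapsto t_i(J)$. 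Let $J_n$ be the value at which $t_n(J)$ first equals $b$; since $\psi(x,J)>x$ whenever $J>0$, the resulting points $a=t_0(J_n)<t_1(J_n)<\dots<t_n(J_n)=b$ form a genuine $n$-partition, and by construction $F(t_{i-1}(J_n),t_i(J_n))=J_n$ for every $i$.

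For the limit, I first observe that
\[
nJ_n = \sum_{i=1}^n (t_i-t_{i-1})\max_{t\in[t_{i-1},t_i]}g(t)\cdot\max_{t\in[t_{i-1},t_i]}h(t) \le (b-a)\cdot\max_{[a,b]}g\cdot\max_{[a,b]}h,
\]
so $nJ_n$ is bounded and in particular $J_n\to 0$. This forces the mesh $\norm{\Delta_n}$ to tend to zero: otherwise, by compactness one could extract a sequence of subintervals converging to a nondegenerate $[x^\ast,y^\ast]$, and continuity of $F$ together with $F(x^\ast,y^\ast)>0$ (again using the hypotheses on $g$ and $h$) would contradict $J_n\to 0$.

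Once the mesh tends to zero, I would compare $nJ_n$ to a Riemann sum of $gh$ at the points $\xi_i\in[t_{i-1},t_i]$ where $g$ attains its maximum. Using $g\ge 0$,
\[
0\le \max_{[t_{i-1},t_i]}g\cdot\max_{[t_{i-1},t_i]}h - g(\xi_i)h(\xi_i) = g(\xi_i)\Bigl[\max_{[t_{i-1},t_i]}h - h(\xi_i)\Bigr]\le \max_{[a,b]}g\cdot\omega_i(h),
\]
where $\omega_i(h)$ is the oscillation of $h$ on the $i$-th subinterval. Summing against $(t_i-t_{i-1})$ pinches $nJ_n$ between the Riemann sum $\sum(t_i-t_{i-1})g(\xi_i)h(\xi_i)$ and that sum plus $\max_{[a,b]}g\cdot\sum(t_i-t_{i-1})\omega_i(h)$; the latter vanishes by Riemann integrability of $h$, while the former converges to $\int_a^b g(t)h(t)\dt$. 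This yields the claimed identity.

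The hard part is the existence argument: well-definedness of the inverse $\psi$ and the continuous dependence of the greedy iteration on $J$ both hinge on strict monotonicity of $F$ in its second argument, which requires precisely the two stated hypotheses on $g$ and $h$. By contrast, the convergence step reduces to a routine Darboux comparison once the mesh is known to vanish.
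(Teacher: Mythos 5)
Your argument is correct, but both halves depart from the paper's proof. For the existence of the equalizing partition the paper follows Gleason: it parametrizes $n$-divisions by the $(n-1)$-simplex, maps $(u_1,\dots,u_n)$ to the normalized vector of the quantities $(t_i-t_{i-1})\max_{[t_{i-1},t_i]}g\cdot\max_{[t_{i-1},t_i]}h$, notes that this continuous self-map of the simplex carries every face into itself and is therefore surjective, and takes a preimage of $(\tfrac1n,\dots,\tfrac1n)$. Your shooting construction avoids that topological surjectivity fact entirely and is more elementary, but it is also the one place where detail is owed: to make ``the value at which $t_n(J)$ first equals $b$'' precise you should extend each $t_i(\cdot)$ by capping at $b$, check that the capped iterates remain continuous and non-decreasing in $J$ (this uses that $F(x,y)$ is non-increasing in $x$ as well as strictly increasing in $y$), and let $J$ increase to the infimum of $\set{J\,:\, t_n(J)=b}$ to conclude that all $n$ equations, including the one for the last subinterval, hold there with $t_{i-1}<t_i$ throughout. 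For the limit the paper argues by an $\varepsilon$-case analysis: on subintervals where both maxima exceed $\varepsilon$ the length is at most $nJ_n/(n\varepsilon^2)$, hence small, and the remaining subintervals contribute little because $g$ or $h$ is small there; you instead show directly that the mesh tends to zero by compactness and then compare $nJ_n$ with the Riemann sum of $gh$ taken at the maximizers of $g$, so that only the oscillation of $h$ enters the error. Your version of this second half is cleaner and sidesteps the paper's somewhat delicate case split; the price is the compactness argument for the mesh, which the paper never needs to make explicit.
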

\begin{proof} Parametrize the $(n-1)$ simplex $\sigma$ by $n$-tuples $(u_{1},u_{2},\dots,u_{n})$, where $u_{i}\geq 0$ and $\sum_{i=1}^{n}u_{i}=1$. Let this $n$-tuple correspond to the partition of $[a,b]$ given by
\[t_{i}=a+ \paran{b-a}(u_{1}+u_{2}+\dots+u_{i}),\quad 1\leq i\leq n,\text{ and }t_{0}=a.\]
Let us define the function
\[\psi(u_{1},u_{2},\dots,u_{n})=(w_{1},w_{2},\dots,w_{n}),\quad w_{i}= \frac{v_{i}}{\sum_{i=1}^{n}v_{i}},\]where 
$v_{i}=\paran{b-a}u_{i}\cdot\max_{t\in[t_{i-1},t_{i}]}g(t)\cdot\max_{t\in[t_{i-1},t_{i}]}h(t).$\par
We have
\[\sum_{i=1}^{n}v_{i}=\sum_{i=1}^{n}(t_{i}-t_{i-1})\max_{t\in[t_{i-1},t_{i}]}g(t)\cdot\max_{t\in[t_{i-1},t_{i}]}h(t)\] is an upper Riemman sum for $\int_{a}^{b}g(t)h(t)\dt$ and $\sum_{i=1}^{n}v_{i}>0$. \par
Since the maximum value of a continuous function over a closed interval depends continuously on the endpoints of that interval, $\psi$ is a continuous function. Because $w_{i}=0$ implies $u_{i}=0$, $\psi$ maps every face of $\sigma$ into itself. All this prove that $\psi$ is surjective. So there exists $(u_{1},u_{2},\dots,u_{n})$ such that $\psi(u_{1},u_{2},\dots,u_{n})=(\frac{1}{n},\frac{1}{n},\dots,\frac{1}{n})$. This proves the first part of our Lemma.\par
We have $nJ_{n}\leq (b-a)\norm{g}\cdot\norm{h}$. Let $\varepsilon>0$ be given. From the continuity of $g$ and $h$ on $[a,b]$ there is a $\delta>0$ so that $|t-t'|<\delta$ implies $|g(t)-g(t')|<\varepsilon$ and $|h(t)-h(t')|<\varepsilon$. We choose $n> \frac{(b-a)\norm{g}\cdot\norm{h}}{\varepsilon^2\delta}$. \par
If $\max_{t\in [t_{i-1},t_{i}]}g(t)\geq \varepsilon$ and $\max_{t\in [t_{i-1},t_{i}]}h(t)\geq \varepsilon$ we have $$J_{n}=\paran{t_{i}-t_{i-1}}\cdot \max_{t\in [t_{i-1},t_{i}]}g(t)\cdot \max_{t\in [t_{i-1},t_{i}]}h(t)\geq \varepsilon^2 (t_{i}-t_{i-1}),$$ which proves that $$t_{i}-t_{i-1}\leq \frac{J_{n}}{\varepsilon^2}= \frac{nJ_{n}}{n\varepsilon^2}\leq \frac{(b-a)\norm{g}\cdot\norm{h}}{n\varepsilon^2}<\delta.$$ This implies that the oscillations of $g$ and of $h$ over $[t_{i-1},t_{i}]$ are at most $\varepsilon$. Considering $\eta_i$ and $\xi_i$ the points of maximum for $g$ and $h$ over the interval $[t_{i-1},t_{i}]$ and applying the Mean Value Theorem for integrals we obtain
\begin{flalign*}
\abs{nJ_{n}-\int_{a}^{b}g(t)h(t)\dt}&=\abs{\sum_{i=1}^{n}\paranp{g(\eta_{i})h(\xi_i)-g(c_{i})h(c_{i})}(t_{i}-t_{i-1})}\\
&\leq \sum_{i=1}^{n}\paran{|g(\eta_i)-g(c_i)|\cdot |h(\xi_i)|+ |g(c_i)|\cdot|h(\xi_i)-h(c_i)|}(t_{i}-t_{i-1})\\
&\leq \varepsilon (\norm{h}+\norm{g})(b-a).
\end{flalign*}
This proves that $nJ_{n}$ tends to $\int_{a}^{b}g(t)h(t)\dt$.
 \par
Consider now the case when $\max_{t\in [t_{i-1},t_{i}]}g(t)< \varepsilon$ or $\max_{t\in [t_{i-1},t_{i}]}h(t)< \varepsilon$. Suppose $g(t)<\varepsilon$ for every $t\in [t_{i-1},t_{i}]$. The case when $\max_{t\in[t_{i-1},t_{i}]} h(t)<\varepsilon$ can be analysed similarly. Because $g$ is nonnegative we deduce also that the oscillation of $g$ over the interval $[t_{i-1},t_{i}]$ is at most $\varepsilon$. As we have done before $nJ_{n}$ differs from the integral $\int_{a}^{b}g(t)h(t)\dt$ by less than $\varepsilon(b-a)3\norm{h}$. 
\end{proof}
\begin{lemma}\label{holhos-lemma-estimate}
For every function $f\in C^{1}[a,b]$ and every $g\in C^{1}[a,b]$ with $\unu{g}(t)>0$, for every $t\in [a,b]$, we have
\[\int_{a}^{b}f(t)\dg -[g(b)-g(a)]\min_{t\in[a,b]}f(t)\leq \frac{1}{2}(b-a)^2 \norm{\unu{f}}\cdot\norm{\unu{g}}.\] 
\end{lemma}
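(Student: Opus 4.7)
The plan is to reduce the left-hand side to a single integral that is visibly non-negative, then estimate its integrand by the Mean Value Theorem. Since $f$ is continuous on the compact interval $[a,b]$, the minimum is attained at some point $c \in [a,b]$, so write $m = f(c) = \min_{t \in [a,b]} f(t)$. Because $g \in C^1$ with $g' > 0$, the Riemann--Stieltjes integral reduces to an ordinary Riemann integral, and $g(b) - g(a) = \int_a^b g'(t)\,dt$. Therefore
\[
\int_a^b f(t)\,dg - [g(b)-g(a)]\,m \;=\; \int_a^b \bigl[f(t) - f(c)\bigr]\, g'(t)\,dt,
\]
and the integrand is non-negative since $f(t) \geq f(c)$ and $g'(t) > 0$.

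Next I would apply the Mean Value Theorem to $f$: for every $t \in [a,b]$ there exists $\theta$ between $t$ and $c$ with $f(t) - f(c) = f'(\theta)(t-c)$, giving the pointwise bound $0 \leq f(t) - f(c) \leq \|f'\| \cdot |t-c|$. Combining this with $g'(t) \leq \|g'\|$ yields
\[
\int_a^b \bigl[f(t) - f(c)\bigr]\, g'(t)\,dt \;\leq\; \|f'\| \cdot \|g'\| \int_a^b |t - c|\,dt.
\]

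Finally I would compute the remaining integral explicitly:
\[
\int_a^b |t-c|\,dt \;=\; \frac{(c-a)^2 + (b-c)^2}{2},
\]
and note that as a function of $c \in [a,b]$ this is convex with maximum at the endpoints, where it equals $(b-a)^2$. Hence $\int_a^b |t-c|\,dt \leq \tfrac{1}{2}(b-a)^2$, which gives exactly the inequality in the statement.

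There is really no serious obstacle here: the only thing one must notice is that subtracting $[g(b)-g(a)] m$ lets one insert $f(c)$ under the integral sign, after which the estimate is a one-line application of MVT plus the elementary bound $(c-a)^2 + (b-c)^2 \leq (b-a)^2$ for $c \in [a,b]$. The hypothesis $g' > 0$ is used only to guarantee that the integrand is non-negative so that the bound on $g'$ can be pulled out without sign issues.
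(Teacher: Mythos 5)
Your proof is correct and follows essentially the same route as the paper: write the left-hand side as $\int_a^b [f(t)-f(c)]g'(t)\,dt$ where $c$ is the minimizer, bound the integrand by $\|f'\|\cdot\|g'\|\,|t-c|$, and use $\int_a^b|t-c|\,dt=\frac{(c-a)^2+(b-c)^2}{2}\leq\frac{(b-a)^2}{2}$. (Only a phrasing quibble: it is the numerator $(c-a)^2+(b-c)^2$, not the integral itself, that equals $(b-a)^2$ at the endpoints.)
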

\begin{proof}
Let $c\in[a,b]$ be the minimum point of $f$ over $[a,b]$. We have
\begin{flalign*}
\int_{a}^{b}f(t)\dg& -[g(b)-g(a)]\min_{t\in[a,b]}f(t)=\int_{a}^{b}[f(t)-f(c)]\unu{g}(t)\dt\\
&\leq \norm{\unu{f}}\cdot\norm{\unu{g}}\cdot \int_{a}^{b}|t-c|\dt.
\end{flalign*}
The proof is completed by using the inequality:
\[\int_{a}^{b}|t-c|\dt= \frac{(c-a)^2}{2}+ \frac{(b-c)^2}{2}\leq \frac{(b-a)^2}{2}.\]
\end{proof}
\begin{lemma}\label{holhos-lemma-main-1}
Consider $f$ a function of class $C^1$ defined on $[a,b]$ with the derivative $\unu{f}$ having a finite number of zeros. Let $g\in C^{1}[a,b]$ be a function with $\unu{g}(t)>0$ for every $t$ in $[a,b]$. Then
\[\limsup_{n\rightarrow\infty}\ n\paran{\int_{a}^{b}f(t)\dg-RS(f,g,\Delta_{opt},\min)}\leq\frac{1}{2}\paran{\int_{a}^{b}\sqrt{|\unu{f}(t)|\cdot \unu{g}(t)}\dt}^2.\]
\end{lemma}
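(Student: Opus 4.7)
The plan is to exploit the two preceding lemmas in tandem: bound the local excess on each subinterval via Lemma \ref{holhos-lemma-estimate}, and engineer the division by invoking Lemma \ref{holhos-gleason-general} so that every local bound equals the same constant $J_n^2$. The key trick is to feed the square roots $\sqrt{|\unu{f}|}$ and $\sqrt{\unu{g}}$ into Lemma \ref{holhos-gleason-general} rather than $|\unu{f}|$ and $\unu{g}$ themselves.

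First, since $\Delta_{opt}$ maximizes $\Delta\mapsto RS(f,g,\Delta,\min)$, for \emph{any} $n$-division $\Delta$ we have
\[
\int_a^b f(t)\dg-RS(f,g,\Delta_{opt},\min)\leq \int_a^b f(t)\dg-RS(f,g,\Delta,\min).
\]
Applying Lemma \ref{holhos-lemma-estimate} on each subinterval $[t_{i-1},t_i]$ (legitimate since $\unu{g}>0$ on $[a,b]$) and summing yields
\[
\int_a^b f(t)\dg-RS(f,g,\Delta,\min)\leq \frac{1}{2}\sum_{i=1}^n (t_i-t_{i-1})^2 \max_{[t_{i-1},t_i]}|\unu{f}(t)|\cdot\max_{[t_{i-1},t_i]}\unu{g}(t).
\]

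Now apply Lemma \ref{holhos-gleason-general} with data $G(t)=\sqrt{|\unu{f}(t)|}$ and $H(t)=\sqrt{\unu{g}(t)}$ (capitalised to avoid a clash with the $g$ already in use). The function $G$ is continuous, non-negative, and, thanks to the hypothesis that $\unu{f}$ has only finitely many zeros, not identically zero on any open subinterval; $H$ is continuous and strictly positive. Lemma \ref{holhos-gleason-general} therefore furnishes a division with
\[
(t_i-t_{i-1})\cdot\max_{[t_{i-1},t_i]}\sqrt{|\unu{f}|}\cdot\max_{[t_{i-1},t_i]}\sqrt{\unu{g}}=J_n\quad\text{for every }i,
\]
and $nJ_n\to \int_a^b \sqrt{|\unu{f}(t)|\unu{g}(t)}\dt$.

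Since $\max\sqrt{\varphi}=\sqrt{\max\varphi}$ for non-negative $\varphi$, squaring the equal-quantities relation gives $(t_i-t_{i-1})^2\max|\unu{f}|\cdot\max\unu{g}=J_n^2$, the same constant on every subinterval. Summing over $i$ collapses the right-hand side of our earlier bound to $nJ_n^2/2$, whence
\[
n\paran{\int_a^b f(t)\dg-RS(f,g,\Delta_{opt},\min)}\leq \frac{1}{2}(nJ_n)^2.
\]
Passing to the $\limsup$ and using the convergence of $nJ_n$ delivers the claimed estimate. The main subtlety, and the only place where the hypothesis on the zeros of $\unu{f}$ is really used, is ensuring that $\sqrt{|\unu{f}|}$ satisfies the non-triviality condition of Lemma \ref{holhos-gleason-general}; the choice of square roots (rather than $|\unu{f}|$ and $\unu{g}$ directly) is precisely what produces the square of the integral in the final bound.
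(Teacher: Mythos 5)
Your proposal is correct and follows essentially the same route as the paper: apply Lemma \ref{holhos-gleason-general} to $\sqrt{|\unu{f}|}$ and $\sqrt{\unu{g}}$ to equalize the per-interval bounds from Lemma \ref{holhos-lemma-estimate} at the common value $J_n^2$, then use the optimality of $\Delta_{opt}$ and $nJ_n\to\int_a^b\sqrt{|\unu{f}(t)|\unu{g}(t)}\dt$. Your explicit check of the non-triviality hypothesis for $\sqrt{|\unu{f}|}$ is a small point the paper leaves implicit, but the argument is the same.
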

\begin{proof}
We apply Lemma \ref{holhos-gleason-general} to the functions $\abs{\unu{f}(t)}^{\frac{1}{2}}$ and $\abs{\unu{g}(t)}^{\frac{1}{2}}$ and obtain a division $\Delta':\ a=t_{0}<t_{1}<t_{2}<\dots <t_{n}=b$ such that
\[J_{n}=\paran{t_{i}-t_{i-1}}\cdot \max_{t\in [t_{i-1},t_{i}]}\abs{\unu{f}(t)}^{\frac{1}{2}}\cdot \max_{t\in [t_{i-1},t_{i}]}\abs{\unu{g}(t)}^{\frac{1}{2}},\] has the same value for all values of $i\in\set{1,2,\dots,n}$ and 
\[\lim_{n\rightarrow\infty} nJ_{n}=\int_{a}^{b}\sqrt{|\unu{f}(t)|\cdot \unu{g}(t)}\dt.\]
Using Lemma \ref{holhos-lemma-estimate} we obtain
\begin{flalign*}
\int_{a}^{b}f(t)\dg-RS(f,g,\Delta',\min)&=\sum_{i=1}^{n}\paran{\int_{t_{i-1}}^{t_i}f(t)\dg-[g(t_i)-g(t_{i-1})]\min_{t\in[t_{i-1},t_i]}f(t)}\\& \leq \frac{1}{2}\sum_{i=1}^{n}(t_{i}-t_{i-1})^2\cdot \max_{t\in [t_{i-1},t_i]} |\unu{f}(t)|\cdot \max_{t\in [t_{i-1},t_i]} |\unu{g}(t)|
\end{flalign*}
and finally
\begin{flalign*}
\limsup_{n\rightarrow\infty}\ n&\paran{\int_{a}^{b}f(t)\dg-RS(f,g,\Delta_{opt},\min)}\\&\leq \limsup_{n\rightarrow\infty}\ n\paran{\int_{a}^{b}f(t)\dg-RS(f,g,\Delta',\min)}\\&\leq \limsup_{n\rightarrow\infty} n\frac{1}{2}nJ_{n}^2= \frac{1}{2}\lim_{n\rightarrow\infty} (nJ_{n})^2\\&= \frac{1}{2}\paran{\int_{a}^{b}\sqrt{|\unu{f}(t)|\cdot \unu{g}(t)}\dt}^2.
\end{flalign*}
\end{proof}
\begin{lemma}\label{holhos-lemma-modulus}
Consider $f$ a function of class $C^1$ defined on $[a,b]$. Let $g\in C^{1}[a,b]$ be a function with $\unu{g}(t)>0$ for every $t$ in $[a,b]$. If $\unu{f}(t)\neq 0$ in a subinterval $[p,q]$ of $[a,b]$, then for every $\xi\in[p,q]$ we have
\begin{flalign*}&\abs{\int_{p}^{q}f(t)\dg-[g(q)-g(p)]\min_{t\in[p,q]}f(t)- \frac{1}{2}(q-p)^2|\unu{f}(\xi)|\unu{g}(\xi)}\\&\leq \frac{1}{2}(q-p)^2\cdot \paranp{\norm{\unu{g}}\cdot\omega(\unu{f},q-p)+\norm{\unu{f}}\cdot\omega(\unu{g},q-p)}\end{flalign*} where $\omega(h,\delta)$ is the usual modulus of continuity of the function $h$.
\end{lemma}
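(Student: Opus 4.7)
Since $\unu{f}$ is continuous and nowhere zero on $[p,q]$, it keeps a constant sign there, so $f$ is strictly monotone on $[p,q]$ and its minimum is attained at an endpoint $c$: namely $c=p$ if $\unu{f}>0$ and $c=q$ if $\unu{f}<0$. Using $g\in C^{1}[a,b]$ and the Fundamental Theorem of Calculus I would rewrite
\[\int_{p}^{q}f(t)\dg-[g(q)-g(p)]\min_{t\in[p,q]}f(t)=\int_{p}^{q}[f(t)-f(c)]\unu{g}(t)\dt=\int_{p}^{q}\paran{\int_{c}^{t}\unu{f}(s)\ds}\unu{g}(t)\dt.\]

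Next, I would realize the target quantity $\tfrac{1}{2}(q-p)^{2}\abs{\unu{f}(\xi)}\unu{g}(\xi)$ as the ``frozen-coefficient'' version of the same double integral. A direct computation gives
\[\int_{p}^{q}\paran{\int_{c}^{t}\unu{f}(\xi)\ds}\unu{g}(\xi)\dt=\unu{f}(\xi)\unu{g}(\xi)\int_{p}^{q}(t-c)\dt,\]
which equals $+\tfrac{1}{2}(q-p)^{2}\unu{f}(\xi)\unu{g}(\xi)$ when $c=p$ and $-\tfrac{1}{2}(q-p)^{2}\unu{f}(\xi)\unu{g}(\xi)$ when $c=q$; in both cases the sign of $\int_{p}^{q}(t-c)\dt$ agrees with that of $\unu{f}(\xi)$, so the product equals $\tfrac{1}{2}(q-p)^{2}\abs{\unu{f}(\xi)}\unu{g}(\xi)$.

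Subtracting these two identities, the quantity whose absolute value appears in the lemma is
\[E=\int_{p}^{q}\int_{c}^{t}\paranp{\unu{f}(s)\unu{g}(t)-\unu{f}(\xi)\unu{g}(\xi)}\ds\dt.\]
Splitting
\[\unu{f}(s)\unu{g}(t)-\unu{f}(\xi)\unu{g}(\xi)=[\unu{f}(s)-\unu{f}(\xi)]\unu{g}(t)+\unu{f}(\xi)[\unu{g}(t)-\unu{g}(\xi)]\]
and using $s,t,\xi\in[p,q]$ (so $|s-\xi|,|t-\xi|\leq q-p$) gives the pointwise bound
\[\abs{\unu{f}(s)\unu{g}(t)-\unu{f}(\xi)\unu{g}(\xi)}\leq \omega(\unu{f},q-p)\norm{\unu{g}}+\norm{\unu{f}}\,\omega(\unu{g},q-p).\]
Since $\int_{p}^{q}|t-c|\dt=\tfrac{1}{2}(q-p)^{2}$ in both sign cases, integrating this pointwise bound over the (signed) triangular region yields the required estimate on $|E|$.

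\textbf{Main obstacle.} The only subtlety is handling the two sign cases $\unu{f}>0$ and $\unu{f}<0$ uniformly. Using the convention $\int_{c}^{t}=-\int_{t}^{c}$ keeps the signs straight and makes the area $\tfrac{1}{2}(q-p)^{2}$ of the triangular region of integration appear identically in both cases, so no separate case analysis is needed in the final estimate.
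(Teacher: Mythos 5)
Your argument is correct. It starts from the same reduction as the paper, namely
\[
\int_{p}^{q}f(t)\dg-[g(q)-g(p)]\min_{t\in[p,q]}f(t)=\int_{p}^{q}[f(t)-f(c)]\unu{g}(t)\dt
\]
with $c$ the endpoint where the minimum sits, and it uses the identical splitting of $\unu{f}(s)\unu{g}(t)-\unu{f}(\xi)\unu{g}(\xi)$ into $[\unu{f}(s)-\unu{f}(\xi)]\unu{g}(t)+\unu{f}(\xi)[\unu{g}(t)-\unu{g}(\xi)]$, which is exactly how the paper produces the constant $\norm{\unu{g}}\omega(\unu{f},q-p)+\norm{\unu{f}}\omega(\unu{g},q-p)$. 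The difference is in the middle step: the paper applies the first Mean Value Theorem for integrals twice to collapse the expression to $\unu{g}(c)\unu{f}(d)\tfrac{(q-p)^2}{2}$ and then compares the point values $\unu{g}(c)\unu{f}(d)$ and $\unu{f}(\xi)\unu{g}(\xi)$, whereas you keep the double integral and bound its integrand pointwise against the frozen value $\unu{f}(\xi)\unu{g}(\xi)$. Your route buys two small things: you never need to verify the sign condition on the weight that the integral MVT requires (that $f(t)-f(p)$, respectively $q-u$, does not change sign), and the signed convention $\int_{c}^{t}=-\int_{t}^{c}$ together with $\int_{p}^{q}|t-c|\dt=\tfrac12(q-p)^2$ for an endpoint $c$ lets you treat the cases $\unu{f}>0$ and $\unu{f}<0$ uniformly, where the paper treats only $\unu{f}>0$ and declares the other case similar. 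The paper's version is a line or two shorter once the MVT is granted; both arguments produce exactly the same constant.
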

\begin{proof}
Suppose $\unu{f}>0$ on $[p,q]$. The case when the derivative of $f$ is strictly negative on $[p,q]$ can be treated similarly. Because $f$ is strictly increasing the minimum of $f$ is attained in $p$. We have
\begin{flalign*}
\int_{p}^{q}f(t)\dg-[g(q)-g(p)]\min_{t\in[p,q]}f(t)=\int_{p}^{q}[f(t)-f(p)]\unu{g}(t)\dt.
\end{flalign*}
Applying the Mean Value Theorem for integrals twice we obtain
\begin{flalign*}
\int_{p}^{q}[f(t)-f(p)]\unu{g}(t)\dt&=\unu{g}(c)\cdot \int_{p}^{q}[f(t)-f(p)]\dt\\&=\unu{g}(c)\cdot \int_{p}^{q}\int_{p}^{t}\unu{f}(u)\du\dt\\&=\unu{g}(c)\cdot \int_{p}^{q}\unu{f}(u)(q-u)\du\\&=\unu{g}(c)\cdot \unu{f}(d)\cdot\frac{(q-p)^2}{2},
\end{flalign*}
for some $c,d\in(p,q)$. Because
\[\abs{\unu{g}(c)\cdot \unu{f}(d)-\unu{f}(\xi)\unu{g}(\xi)}\leq \norm{\unu{g}}\cdot\omega(\unu{f},q-p)+\norm{\unu{f}}\cdot\omega(\unu{g},q-p)\]
the proof is complete.
\end{proof}
\begin{lemma}\label{holhos-lemma-main-2}
Consider $f$ a function of class $C^1$ defined on $[a,b]$ with the derivative $\unu{f}$ having a finite number of zeros. Let $g\in C^{1}[a,b]$ be a function with $\unu{g}(t)>0$ for every $t$ in $[a,b]$. Then
\[\liminf_{n\rightarrow\infty}\ n\paran{\int_{a}^{b}f(t)\dg-RS(f,g,\Delta_{opt},\min)}\geq\frac{1}{2}\paran{\int_{a}^{b}\sqrt{|\unu{f}(t)|\cdot \unu{g}(t)}\dt}^2.\]
\end{lemma}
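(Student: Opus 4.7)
The plan is to mirror the proof of Lemma~\ref{holhos-lemma-main-1}, but using the lower bound provided by Lemma~\ref{holhos-lemma-modulus} and combining the pieces through the Cauchy--Schwarz inequality in the form $\sum a_i^{2}\ge(\sum a_i)^{2}/n$. Since $\Delta_{opt}$ minimises $D_n:=\int_a^b f\dg-RS(f,g,\Delta_{opt},\min)$ among all $n$-divisions, the partition cannot be freely chosen; the inequality must be established with the division we are handed. A preliminary step will therefore be to show that $\|\Delta_{opt}\|\To0$, which follows from the upper bound $nD_n=O(1)$ of Lemma~\ref{holhos-lemma-main-1}: otherwise, along a subsequence, one could extract a partition interval converging to a subinterval of positive length on which $f$ is non-constant (since $\unu{f}$ has only finitely many zeros), whose contribution to $D_n$ would stay bounded away from zero and contradict $D_n\To0$.

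I would then fix $\lambda>0$ and, for each large $n$, let $\mathcal{G}_\lambda\subset\set{1,\dots,n}$ consist of those indices $i$ for which $\abs{\unu{f}(t)}\ge\lambda$ throughout $[t_{i-1},t_i]$. On every such interval $\unu{f}$ has constant sign, so Lemma~\ref{holhos-lemma-modulus} applies. Using the mean value theorem for integrals applied to the continuous function $\sqrt{\abs{\unu{f}}\unu{g}}$, I would pick $\xi_i\in[t_{i-1},t_i]$ with $(t_i-t_{i-1})\sqrt{\abs{\unu{f}(\xi_i)}\unu{g}(\xi_i)}=\int_{t_{i-1}}^{t_i}\sqrt{\abs{\unu{f}}\unu{g}}\dt$. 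Applied with this $\xi_i$, Lemma~\ref{holhos-lemma-modulus} produces
\[
\int_{t_{i-1}}^{t_i}f\dg-[g(t_i)-g(t_{i-1})]\min_{[t_{i-1},t_i]}f\ge\tfrac12(t_i-t_{i-1})^{2}\abs{\unu{f}(\xi_i)}\unu{g}(\xi_i)-E_i,
\]
with $E_i$ at most $\tfrac12(t_i-t_{i-1})^{2}[\norm{\unu{g}}\omega(\unu{f},\|\Delta_{opt}\|)+\norm{\unu{f}}\omega(\unu{g},\|\Delta_{opt}\|)]$. Because $\abs{\unu{f}(\xi_i)}\unu{g}(\xi_i)\ge\lambda\min_{[a,b]}\unu{g}>0$ on $\mathcal{G}_\lambda$, these errors are uniformly bounded by some $\varepsilon_n\To0$ times the corresponding main terms, so summing only over $\mathcal{G}_\lambda$ (the other intervals contribute non-negative amounts to $D_n$) yields
\[
D_n\ge\frac{1-\varepsilon_n}{2}\sum_{i\in\mathcal{G}_\lambda}\paran{\int_{t_{i-1}}^{t_i}\sqrt{\abs{\unu{f}}\unu{g}}\dt}^{2}.
\]

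Cauchy--Schwarz then gives $\sum_{\mathcal{G}_\lambda}a_i^{2}\ge(\sum_{\mathcal{G}_\lambda}a_i)^{2}/\abs{\mathcal{G}_\lambda}\ge(\sum_{\mathcal{G}_\lambda}a_i)^{2}/n$, while for $n$ large the indices $i\notin\mathcal{G}_\lambda$ correspond to intervals contained in $\set{\abs{\unu{f}}<2\lambda}$, whose $\sqrt{\abs{\unu{f}}\unu{g}}$-integral is at most $\sqrt{2\lambda\norm{\unu{g}}}(b-a)$. Combining these,
\[
\liminf_{n\To\infty}nD_n\ge\tfrac12\paran{\int_a^b\sqrt{\abs{\unu{f}}\unu{g}}\dt-\sqrt{2\lambda\norm{\unu{g}}}(b-a)}^{2}
\]
holds for every $\lambda>0$, and letting $\lambda\To0$ will complete the argument. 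The main obstacle is controlling the modulus-of-continuity error: a direct bound on $\sum(t_i-t_{i-1})^{2}\omega(\cdot,\|\Delta_{opt}\|)$ is not obviously $o(1/n)$ when $\|\Delta_{opt}\|$ fails to be $O(1/n)$, which can happen near zeros of $\unu{f}$. The device of introducing the threshold $\lambda$ turns the error into a vanishing fraction of the main term on $\mathcal{G}_\lambda$, while the contributions from intervals where $\unu{f}$ is small are absorbed at the very end by letting $\lambda\To0$.
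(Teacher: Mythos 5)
Your plan is correct and would yield a valid proof, but it follows a genuinely different route from the paper's. The paper never analyses $\Delta_{opt}$ directly: it proves that for every $\delta>0$ there is a fixed integer $r$ such that $(n+r)^{1/2}\paran{\int_a^b f(t)\dg-RS(f,g,\Delta,\min)}^{1/2}\geq \frac{1}{\sqrt2}\int_a^b\sqrt{|\unu{f}(t)|\unu{g}(t)}\dt-\delta(b-a)$ holds for \emph{every} $n$-division $\Delta$. It does this by refining $\Delta$ with at most $r$ added points (enough to force mesh $\leq\eta$, plus the endpoints of a $\zeta$-neighbourhood $Z_\zeta$ of the zero set of $\unu{f}$), using the fact that refinement can only decrease $\int_a^b f(t)\dg-RS(f,g,\cdot,\min)$, and applying Cauchy--Schwarz in the form $\sum a_i\leq m^{1/2}(\sum a_i^2)^{1/2}$ with $m\leq n+r$. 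You instead work on $\Delta_{opt}$ itself, which obliges you to first prove $\norm{\Delta_{opt}}\To 0$; your sketch of that step (via $D_n=O(1/n)$ from Lemma \ref{holhos-lemma-main-1} plus a compactness and non-constancy argument) is sound, but note it makes the lower bound logically dependent on the upper bound, whereas the paper's proof of Lemma \ref{holhos-lemma-main-2} is self-contained. Your threshold set $\set{|\unu{f}|\geq\lambda}$ plays the role of the paper's complement of $Z_\zeta$, and your reversed Cauchy--Schwarz $\sum a_i^2\geq(\sum a_i)^2/n$ replaces the paper's square-root bookkeeping; what you gain is the elimination of the point-counting ($r=r_1+r_2$, $m\leq n+r$) and of the uniformity over all divisions, at the cost of the preliminary mesh lemma and an extra limit $\lambda\To 0$ at the end. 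Two small things to tidy in the write-up: the quantity $\int_a^b\sqrt{|\unu{f}(t)|\unu{g}(t)}\dt-\sqrt{2\lambda\norm{\unu{g}}}(b-a)$ must be replaced by its positive part (or $\lambda$ taken small enough) before squaring, and your $\varepsilon_n$ depends on $\lambda$ through the lower bound $\lambda\min_{[a,b]}\unu{g}$ on the main term, which is harmless only because you let $n\To\infty$ before $\lambda\To 0$.
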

\begin{proof}
We first prove that for any $\delta>0$ there exists a positive integer $r$ such that for any $n$-division $\Delta$ of $[a,b]$ the following inequality is true:
\begin{flalign*}(n+r)^{\frac{1}{2}}\paran{\int_{a}^{b}f(t)\dg-RS(f,g,\Delta,\min)}^{\frac{1}{2}}\geq \frac{1}{\sqrt{2}}\int_{a}^{b}\sqrt{|\unu{f}(t)| \unu{g}(t)}\dt- \delta (b-a).
\end{flalign*}
Since the function $x\mapsto x^{1/2}$ is uniformly continuous on $[0,\infty)$, there exists $\delta_{1}>0$ such that for any $x$ and $y$ in $[0,\infty)$ if $|x-y|<\delta_{1}$ then $|x^{1/2}-y^{1/2}|<\delta$. We take a subinterval $[p,q]$ of $[a,b]$ and suppose $\unu{f}(t)\neq 0$ in $[p,q]$. Because of the continuity of the derivatives of $f$ and $g$ there exists $\eta>0$ such that if $q-p<\eta$ then $\frac{1}{2}\paranp{\norm{\unu{g}}\cdot\omega(\unu{f},q-p)+\norm{\unu{f}}\cdot\omega(\unu{g},q-p)}<\delta_{1}$. Using Lemma \ref{holhos-lemma-modulus} we obtain
\[\abs{\frac{\int_{p}^{q}f(t)\dg-[g(q)-g(p)]\min_{t\in[p,q]}f(t)}{(q-p)^2}- \frac{1}{2}|\unu{f}(\xi)|\unu{g}(\xi)}\leq \delta_{1},\] for any $\xi\in[p,q]$. Therefore, we have
\[\abs{\frac{\paran{\int_{p}^{q}f(t)\dg-[g(q)-g(p)]\min_{t\in[p,q]}f(t)}^{\frac{1}{2}}}{q-p}- \frac{1}{\sqrt{2}}\sqrt{|\unu{f}(\xi)|\unu{g}(\xi)}}\leq \delta,\] which is equivalent with
\[\abs{\paran{\int_{p}^{q}f(t)\dg-[g(q)-g(p)]\min_{t\in[p,q]}f(t)}^{\frac{1}{2}}-\frac{1}{\sqrt{2}}\sqrt{|\unu{f}(\xi)|\unu{g}(\xi)} (q-p)}\leq \delta(q-p).\]
\par Since $\unu{f}$ is uniformly continuous on $[a,b]$, for the above $\delta>0$ there exists $\zeta>0$ such that $|x-y|<\zeta$ implies $|\unu{f}(x)-\unu{f}(y)|<\delta^2/\norm{\unu{g}}$. We denote by $Z$ the zero set of $\unu{f}$:
\[Z=\set{t\in[a,b]\,|\, \unu{f}(t)=0}\] and define the $\zeta$-neighborhood $Z_{\zeta}$ of $Z$ by
\[Z_{\zeta}=\set{u\in [a,b]\, |\, \exists\, t\in Z:\ |t-u|<\zeta}.\]
Then for any $t\in Z_{\zeta}$ we have $\unu{g}(t)|\unu{f}(t)|<\delta^2$ and $\unu{f}$ is not equal to 0 on the complement of $Z_\zeta$. By the definition of $Z_\zeta$ and the properties of $\unu{f}$ we can see that $Z_\zeta$ is a disjoint union of finitely many intervals (by choosing $\zeta$ small enough). We denote by $r_{1}$ the number of all endpoints of the intervals of $Z_\zeta$. For $\eta>0$ obtained above we take a positive integer $r_{2}$ satisfying $r_{2}\geq (b-a)/\eta$ and set $r=r_{1}+r_{2}$. For any $n$-division $\Delta$ of $[a,b]$ we can add at most $r_{2}$ points to $\Delta$ such that the mesh of the new division is less than or equal to $\eta$. Moreover we add the endpoints of all the intervals of $Z_\zeta$ and denote the new division by
\[\Delta': t_{0}=a<t_{1}<\dots, <t_{m}=b.\] By the definition of $\Delta'$ we have $m\leq n+r$ and $t_i-t_{i-1}\leq \eta$. Each interval $[t_{i-1},t_i]$ satisfies $[t_{i-1},t_i]\subset \overline{Z_{\zeta}}$ or $[t_{i-1},t_i]\subset [a,b]\setminus Z_\zeta$. In both cases we can take $c_{i}\in [t_{i-1},t_i]$ satisfying
\[\int_{t_{i-1}}^{t_i}\sqrt{|\unu{f}(t)|\cdot \unu{g}(t)}\dt= \sqrt{|\unu{f}(c_i)|\cdot \unu{g}(c_i)} (t_i-t_{i-1}).\] In the case $[t_{i-1},t_i]\subset \overline{Z_{\zeta}}$ we have
\begin{flalign*}
\frac{1}{\sqrt{2}}&\sqrt{|\unu{f}(c_i)|\cdot \unu{g}(c_i)} (t_i-t_{i-1})\leq \frac{1}{\sqrt{2}}\delta (t_i-t_{i-1})\\&\leq \paran{\int_{t_i}^{t_{i-1}}f(t)\dg-[g(t_{i})-g(t_{i-1})]\min_{t\in[t_{i-1},t_i]}f(t)}^{\frac{1}{2}}+\delta(t_{i}-t_{i-1}).
\end{flalign*}
In the case $[t_{i-1},t_i]\subset [a,b]\setminus Z_\zeta$, $\unu{f}$ is not equal to 0 in $[t_{i-1},t_i]$, so
\begin{flalign*}
\frac{1}{\sqrt{2}}&\sqrt{|\unu{f}(c_i)|\cdot \unu{g}(c_i)} (t_i-t_{i-1})\\&\leq \paran{\int_{t_i}^{t_{i-1}}f(t)\dg-[g(t_{i})-g(t_{i-1})]\min_{t\in[t_{i-1},t_i]}f(t)}^{\frac{1}{2}}+\delta(t_{i}-t_{i-1}).
\end{flalign*}
Adding all these inequalities for $i=1,2,\dots,m$ we get
\begin{flalign*}
\frac{1}{\sqrt{2}}&\int_{a}^{b}\sqrt{|\unu{f}(t)|\cdot \unu{g}(t)}\dt\\&\leq \sum_{i=1}^{m}\paran{\int_{t_i}^{t_{i-1}}f(t)\dg-[g(t_{i})-g(t_{i-1})]\min_{t\in[t_{i-1},t_i]}f(t)}^{\frac{1}{2}}+\delta(b-a).
\end{flalign*}
Applying the Cauchy-Schwarz inequality to the first term of the right-hand side of the above inequality we obtain
\begin{flalign*}
\sum_{i=1}^{m}&\paran{\int_{t_i}^{t_{i-1}}f(t)\dg-[g(t_{i})-g(t_{i-1})]\min_{t\in[t_{i-1},t_i]}f(t)}^{\frac{1}{2}}\\
&\leq m^{\frac{1}{2}}\paran{\sum_{i=1}^{m}\paran{\int_{t_i}^{t_{i-1}}f(t)\dg-[g(t_{i})-g(t_{i-1})]\min_{t\in[t_{i-1},t_i]}f(t)}}^{\frac{1}{2}}\\&= m^{\frac{1}{2}}\paran{\int_{a}^{b}f(t)\dg- RS(f,g,\Delta',\min)}^{\frac{1}{2}}.
\end{flalign*}
From these we have
\begin{flalign*}
\frac{1}{\sqrt{2}}&\int_{a}^{b}\sqrt{|\unu{f}(t)|\cdot \unu{g}(t)}\dt \leq m^{\frac{1}{2}}\paran{\int_{a}^{b}f(t)\dg- RS(f,g,\Delta',\min)}^{\frac{1}{2}}+\delta(b-a).
\end{flalign*}
Because
\[[g(c)-g(b)]\min_{t\in [b,c]}f(t)+ [g(b)-g(a)]\min_{t\in [a,b]}f(t)\geq [g(c)-g(a)]\min_{t\in [a,c]}f(t),\] for every $a<b<c$, we obtain
\[\int_{a}^{b}f(t)\dg- RS(f,g,\Delta',\min)\leq \int_{a}^{b}f(t)\dg- RS(f,g,\Delta,\min).\]
This estimate and $m\leq n+r$ imply
\begin{flalign*}\frac{1}{\sqrt{2}}\int_{a}^{b}\sqrt{|\unu{f}(t)| \unu{g}(t)}\dt \leq (n+r)^{\frac{1}{2}}\paran{\int_{a}^{b}f(t)\dg-RS(f,g,\Delta,\min)}^{\frac{1}{2}}+ \delta (b-a).
\end{flalign*}
Now, let us prove Lemma \ref{holhos-lemma-main-2}. From the continuity of the function $x\mapsto x^2$ in $x_{0}=\frac{1}{\sqrt{2}}\int_{a}^{b}\sqrt{|\unu{f}(t)| \unu{g}(t)}\dt$, for any $\varepsilon>0$ there exists $\xi>0$ such that if $x_{0}-x\leq \xi$ we have $x_{0}^2-x^2\leq \frac{\varepsilon}{2}$. We take $\delta>0$ which satisfies $\delta(b-a)<\xi$. We can apply the result obtained above and get a positive integer $r$ such that for any $n$-division $\Delta$ of $[a,b]$
\begin{flalign*}\xi&\geq \delta(b-a)\\
&\geq\frac{1}{\sqrt{2}}\int_{a}^{b}\sqrt{|\unu{f}(t)| \unu{g}(t)}\dt - (n+r)^{\frac{1}{2}}\paran{\int_{a}^{b}f(t)\dg-RS(f,g,\Delta,\min)}^{\frac{1}{2}},
\end{flalign*}
which implies
\begin{flalign*}\frac{\varepsilon}{2}\geq \frac{1}{2}\paran{\int_{a}^{b}\sqrt{|\unu{f}(t)| \unu{g}(t)}\dt}^2 - (n+r)\paran{\int_{a}^{b}f(t)\dg-RS(f,g,\Delta,\min)}.
\end{flalign*}
We can substitute the optimal division $\Delta_{opt}$ for $\Delta$ in the above inequality and get
\begin{flalign*} (n+r)\paran{\int_{a}^{b}f(t)\dg-RS(f,g,\Delta_{opt},\min)}\geq  \frac{1}{2}\paran{\int_{a}^{b}\sqrt{|\unu{f}(t)| \unu{g}(t)}\dt}^2 -\frac{\varepsilon}{2}.
\end{flalign*}
Since
\[\lim_{n\rightarrow\infty}\paran{\int_{a}^{b}f(t)\dg-RS(f,g,\Delta_{opt},\min)}=0,\] we can choose a positive integer $N$ such that for $n\geq N$ the inequality
\[0\leq r\paran{\int_{a}^{b}f(t)\dg-RS(f,g,\Delta_{opt},\min)}\leq \frac{\varepsilon}{2}\] holds.
Thus
\[n\paran{\int_{a}^{b}f(t)\dg-RS(f,g,\Delta_{opt},\min)}\geq \frac{1}{2}\paran{\int_{a}^{b}\sqrt{|\unu{f}(t)| \unu{g}(t)}\dt}^2-\varepsilon,\] for every $n\geq N$. This completes the proof.
\end{proof}
\begin{theorem}\label{holhos-theorem-main}
Consider $f$ a function of class $C^1$ defined on $[a,b]$ with the derivative $\unu{f}$ having a finite number of zeros. Let $g\in C^{1}[a,b]$ be a function with $\unu{g}(t)>0$ for every $t$ in $[a,b]$. Then
\[\lim_{n\rightarrow\infty}n\paran{\int_{a}^{b}f(t)\dg-RS(f,g,\Delta_{opt},\min)}=\frac{1}{2}\paran{\int_{a}^{b}\sqrt{|\unu{f}(t)|\cdot \unu{g}(t)}\dt}^2.\]
\end{theorem}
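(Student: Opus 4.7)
The plan is to observe that the theorem follows immediately by combining the two preceding lemmas, which together pinch the limit from above and below by the same quantity. Concretely, Lemma \ref{holhos-lemma-main-1} already establishes
\[\limsup_{n\to\infty} n\paran{\int_{a}^{b}f(t)\dg-RS(f,g,\Delta_{opt},\min)}\leq \frac{1}{2}\paran{\int_{a}^{b}\sqrt{|\unu{f}(t)|\cdot \unu{g}(t)}\dt}^2,\]
and Lemma \ref{holhos-lemma-main-2} establishes the matching bound with $\liminf$ in place of $\limsup$ and the inequality reversed.

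First I would remark that the quantity $n\paran{\int_{a}^{b}f(t)\dg-RS(f,g,\Delta_{opt},\min)}$ is a sequence of non-negative real numbers, since by definition $\Delta_{opt}$ maximizes $RS(f,g,\Delta,\min)$ and this maximum cannot exceed the integral (any lower Riemann--Stieltjes sum is bounded above by $\int_a^b f \dg$ because $\unu{g}>0$ and the integrand $f$ dominates its minimum on each subinterval). Hence both $\liminf$ and $\limsup$ of this sequence are well defined in $[0,\infty]$.

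Next, I would simply chain the two bounds together with the universal inequality $\liminf \leq \limsup$:
\[\frac{1}{2}\paran{\int_{a}^{b}\sqrt{|\unu{f}(t)| \unu{g}(t)}\dt}^2 \leq \liminf_{n\to\infty} (\cdots) \leq \limsup_{n\to\infty} (\cdots) \leq \frac{1}{2}\paran{\int_{a}^{b}\sqrt{|\unu{f}(t)| \unu{g}(t)}\dt}^2.\]
Since the extreme members coincide, all four quantities are equal, which means the sequence converges and its limit equals the common value stated in the theorem.

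There is essentially no obstacle at this stage: all the genuine difficulty has been absorbed into the upper bound (built from the Gleason-type equipartition Lemma \ref{holhos-gleason-general} together with the per-subinterval estimate of Lemma \ref{holhos-lemma-estimate}) and the lower bound (built from the sharper per-subinterval identity of Lemma \ref{holhos-lemma-modulus} combined with a Cauchy--Schwarz aggregation over an arbitrary division refined to avoid the zero set of $\unu{f}$). The only thing worth double-checking is that the hypotheses of both lemmas are identical to those of the theorem, which they are: $f\in C^1[a,b]$ with $\unu{f}$ having finitely many zeros, and $g\in C^1[a,b]$ with $\unu{g}>0$ throughout $[a,b]$. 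So the proof reduces to citing the two lemmas and invoking the squeeze.
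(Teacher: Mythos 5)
Your proposal is correct and follows exactly the same route as the paper, which likewise derives the theorem by combining the upper bound of Lemma \ref{holhos-lemma-main-1} with the lower bound of Lemma \ref{holhos-lemma-main-2} and squeezing. The extra remarks you add (non-negativity of the sequence and the matching of hypotheses) are sound but not needed beyond what the two lemmas already give.
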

\begin{proof}
The result follows from the inequalities of Lemma \ref{holhos-lemma-main-1} and \ref{holhos-lemma-main-2}.
\end{proof}

\end{document}